\numberwithin{equation}{section}
\newtheorem{theorem}{Theorem}[section]
\newtheorem*{theorem*}{Theorem}
\newtheorem{lemma}[theorem]{Lemma}
\newtheorem*{lemma*}{Lemma}
\newtheorem{example}[theorem]{Example}
\newtheorem{remark}[theorem]{Remark}
\newcommand{\aslabel}[1]{#1\def\@currentlabel{#1}}
\newcommand{\nowlabel}[1]{\def\@currentlabel{#1}}
\newcommand{\N}{{\mathbb{N}}}
\newcommand{\hsp}{\hspace*{.5cm}}
\definecolor{collnk}{rgb}{.2,0.2,.6}
\definecolor{colcit}{rgb}{.1,0.5,.2}
\definecolor{RED}{rgb}{1,0,0}\definecolor{BLUE}{rgb}{0,0,1} 
\begin{document}
\title{Super-exponential decay rates for eigenvalues and singular values of integral operators on the sphere}
\author{Mario H. Castro$^a$\footnote{Partially supported by FAPEMIG/CNPq grant \# CEX - APQ-00474-14}, \, Tha\'is Jord\~ao$^b$\footnote{Partially supported by FAPESP grants \# 2016/02847-9 and \# 017/07442-0} \, \& \, Ana P. Peron$^b$\footnote{Partially supported by FAPESP grant  $\#$2016/09906-0}
\\ \scriptsize{$^a$Departamento de Matem\'{a}tica, UFU,  Uberlândia - MG, Brasil. 
}\\\scriptsize{$^b$Departamento de Matem\'{a}tica, ICMC - USP, S\~{a}o Carlos - SP, Brasil.}\\ \scriptsize{
		e-mails: mariocastro@ufu.br,\,\,\,\, tjordao@icmc.usp.br, \,\,\,\, apperon@icmc.usp.br}} 

\maketitle 
\begin{abstract}
This paper brings results about the behavior of sequences of eigenvalues or singular values of integral operators generated by square-integrable kernels on the real $m$-dimensional unit sphere, $m\geq2$. Under smoothness assumptions on the generating kernels, given via Laplace-Beltrami differentiability, we obtain super-exponential decay rates for the eigenvalues of the  generated positive integral operators and for singular values of those integral operators which are non-positive. We show an optimal-type result  and provide a list of parametric families of kernels which are of interest for numerical analysis and geostatistical communities and satisfy the smoothness assumptions for the positive case.\\

\noindent{\bf MSC:} 45C05 (41A36 42A82 45M05 45P05 47A75) 
\\

\noindent{\bf Keywords:}  
Decay rates; Eigenvalues; Singular values;
Integral operators; Laplace-Beltrami differentiability; Spheres; Compact two-point homogeneous spaces.
\end{abstract}

\section{Introduction}

The relation between the smoothness of generating kernels and decay rates for eigenvalues or singular values of the generated integral operators was largely explored in the last century and can be useful in branches of mathematics and applied mathematics as it can be seen in \cite{wa-zhu} and references quoted there. It first appeared in Fredholm's fundamental paper \cite{fred} and it has been developed since then by many authors in different settings with several smoothness assumptions. 

There is a considerable amount of results on this subject in the literature, mostly considering Lipschitz and Hölder type assumptions (\cite{bir-sol}), but differentiability also yields useful smoothness conditions (see \cite{chang-ha99, han90, fe-men-pe, jor-men-sun} and references therein). Besides, it is known that finite orders of differentiability produce polynomial type decay rates for the eigenvalues or singular values sequences. On the other hand, it was shown in \cite{reade} that positive sequences decreasing to zero at least faster than $\{n^{-p}\}_n$ are related to $p$-times continuously differentiable kernels. As so we may expect some kind of exponential rates for the decrease of these sequences when the order of smoothness of the kernel is assumed to be infinite.

In fact, this is what happens e.g. in \cite{raman-rao, parfenov, little-reade, aze-men-sharp}  and, in particular, in \cite{kotljar} where B. D. Kotlyar  considered Hilbert-Schmidt integral operators  in $L^2(a,b)$ generated by complex kernels having mean derivatives  of all orders increasing sufficiently rapidly and achieved for the singular values $s_n$  the estimate 
$$
s_n\leq \sqrt{M}\, n^{2+\log_2((b-a)/\pi)-\log_2\sqrt{(2/R)}\log_2 n},
$$
where $R<2$ and $M$ are positive numbers not depending on $n$ but both connected with the smoothness assumption.

In this paper we follow some ideas of \cite{cas-men} and present spherical versions of  Kotlyar's result quoted above. The paper is organized as follows. We state our main results in Section \ref{sec_main-results} after fixing some notations and providing the necessary definitions of integral operators on the spherical context as well as the concept of smoothness via Laplace-Beltrami derivatives. In Section \ref{sec_backg-sph} we discuss some spherical results to be used later. In Section \ref{sec_proofmain} we prove the main theorems of this paper. In Section \ref{sec_optimal} we present an example providing an optimality-type result. Section \ref{sec-examples} presents a list of parametric families of kernels which fit Theorem \ref{maintheorem2} and are important for several applications.
In Section \ref{appendix} we explain how to obtain the main results in the setting of compact two-point homogeneous spaces.

\section{Main results} \label{sec_main-results}

Let $m\geq 2$ be an integer fixed throughout this text and $S^m$ the $m$-dimensional unit sphere centered at the origin of the Euclidean space $\mathbb{R}^{m+1}$. We endow $S^m$ with the surface measure $\sigma_m$ induced by the Lebesgue measure of $\mathbb{R}^{m+1}$ and consider the Hilbert space $L^2(S^{m}):=L^2(S^{m},\sigma_m)$  consisting of all square integrable functions 
$f:S^m\to\mathbb{C}$ with the norm $\|\cdot\|_2$ induced by the inner product
\begin{equation}
\langle f, g \rangle_2=\frac{1}{\omega_m}\int_{S^{m}}f(x)\overline{g(x)}d\sigma_m(x), \quad f,g\in L^2(S^m),
\end{equation}
where $\omega_m$ means the volume of $S^{m}$. 

We call {\em kernel } on $S^m$ a function $K:S^m\times S^m\to \mathbb{C}$ and as before we consider the Hilbert  space $ L^2(S^{m}\times S^m)=:L^2(S^{m}\times S^m,\sigma_m\otimes\sigma_m)$.  It is well known that $K\in L^2(S^{m}\times S^m)$ generates a compact integral operator  $\mathcal K:L^2(S^m)\to L^2(S^m)$ defined by
\begin{equation}
\mathcal{K}(f)=\int_{S^m}K(\cdot,y)f(y)d\sigma_m(y).
\end{equation}

The set  of eigenvalues of $\mathcal{K}$ is known to be a discrete subset $\{\lambda_n(\mathcal{K})\}$ of complex numbers having only zero as limit point. The eigenvalues of the compact operator $|\mathcal{K}|=:(\mathcal{K}^*\mathcal{K})^{1/2}$, in which $\mathcal{K}^*$ stands for the adjoint operator of $\mathcal{K}$, are denoted by $s_n(\mathcal{K})$ and called the \emph{singular values} of $\mathcal{K}$. Since $|\mathcal{K}|$ is also positive and self-adjoint then  the spectral theorem can be invoked in order to ensure that $\{s_n(\mathcal{K})\}$ is real and can be distributed as $$s_1(\mathcal{K})\geq s_2(\mathcal{K})\geq\dots\geq0,$$ regarding possible repetitions caused by the algebraic multiplicities.
 
When $\mathcal{K}$ is also positive we say that $K$ is {\it $L^2$-positive definite} ($L^2$-PD for short).
This implies that $\mathcal{K}$ is also self-adjoint and its eigenvalues  $\lambda_n(\mathcal{K})$  are non-negative real 
numbers to be arranged in a non-increasing order. If $K$ is also continuous then it becomes positive definite on $S^m$  in the sense that
$$
 \sum _{i=1}^{n}\sum _{j=1}^{n}c_{i}\overline{c_{j}}K(x_{i},x_{j})\geq 0,
$$ 
holds for any $ n\in \mathbb {N}$, $x_{1},\dots ,x_{n}\in S^m$, $c_{1},\dots ,c_{n}\in \mathbb{C}$ (see Theorem 2.3 in \cite{fe-men}). 

The smoothness assumption we adopt is given via Laplace-Beltrami derivatives, a variation of the usual derivative on $S^m$. Consider the {\em spherical shifting}  defined by the formula
\begin{equation}
S_{t}^{m}(f)(x):=\frac{1}{|R_t^m|}\int_{
x\cdot y=t}f(y)\,dy, \quad x \in S^{m}, \quad t \in (-1,1),
\end{equation}
where ``$x\cdot y$" is the usual inner product among $x,y\in \mathbb{R}^{{m+1}}$, $dy$ denotes the measure element of the rim $R_t^m:=\{y \in S^{m}: x\cdot y=t\}$ of $S^m$ and $|R_t^m|=\omega_{m-1}(1-{t}^{2})^{(m-1)/2}$ its volume.  We say 
$f \in L^2(S^m)$ is  {\em LB-differentiable} if there exists $\mathcal{D}f \in L^2(S^m)$ such that
\begin{equation}
\lim_{t\to 1^-}\left\|\frac{(I-S_{t}^{m})(f)}{1-t}-\mathcal{D}f\right\|_2=0,
\end{equation}
where $I$ is the identity operator. That being true, the function $\mathcal{D}f$ is called the {\em Laplace-Beltrami derivative} of $f$.\ Higher order derivatives are inductively defined by $\mathcal{D}^1=\mathcal{D}$ and $\mathcal{D}^r:=\mathcal{D}^{1}\circ \mathcal{D}^{r-1}$, $r=2,3,\dots$. We say $f\in L^2(S^m)$ is {\em infinitely LB-differentiable} when it has Laplace-Beltrami derivatives of all orders. 

Properties of Laplace-Beltrami derivatives we will need in order to prove our results will be presented in the next section.\ For more information about this subject we suggest \cite{men-pia} and references therein.\ In particular, the reader can find in \cite{ca-men-oliv} the connection among the Laplace-Beltrami derivative and the usual derivative on $S^m$.

Basic Sobolev-type spaces can be defined following \cite[p.37]{lions-ma}.\ The space of all complex valued functions on $S^m$ which are LB-differentiable up to order $r$ is denoted by $W_2^r$ while the space of those functions that are infinitely LB-differentiable is defined by  
\begin{equation}
W_2^{\infty}:=\bigcap_rW_2^r=\{f\in L^2(S^m): \mathcal{D}^rf\in L^2(S^m), \quad r=1,2,\dots\}.
\end{equation}

 The action of  Laplace-Beltrami derivatives on 
kernels on $S^m$ is similar to that of partial (usual) derivatives: we write $\mathcal{D}_y^rK$  to indicate the $r$-th order Laplace-Beltrami derivative of $K$ with respect to the variable  $y$. In particular, we write
\begin{equation}
K_{0,r}:= \mathcal{D}_y^rK, \quad r=1,2,\ldots,
\end{equation}
 and denote by $\mathcal{K}_{0,r}$ the integral operator generated.   Finally, we say that $K$ {\em belongs to} $W_2^{\infty}$ whenever 
\begin{equation}
 K(x,\cdot)\in W_2^{\infty}, \quad x\in S^m \quad \text{a.e.}.
\end{equation}

Now we are ready to state the main results of this paper.

\begin{theorem}\label{maintheorem}  Let $K\in L^2(S^m\times S^m)$ belong to $W_2^{\infty}$.\ If 
	there exist constants $M>0$ and $R>1$, do not depending upon $r$, such that
	\begin{equation}\label{normbound}
	\|K_{0,r}\|_2 \leq MR^r,
	\end{equation}
then, for all positive sequences $\alpha_n=o(\sqrt[m]{n})$, as $n\to\infty$,
\begin{equation}\label{eq-decay-sing-v}
s_n(\mathcal{K})=o\left(n^{-{\frac{\sqrt[m]{n}}{\delta m} - \alpha_n}}\right),\quad 
\delta=\begin{cases}
2,& \text{if } m=2,\\
1,& \text{if } m\geq3.
\end{cases}
\end{equation}
\end{theorem}

This result can be seen as a spherical version of the one obtained on bounded 
intervals by B. D. Kotlyar in \cite{kotljar}. Similar to  its original real version, it applies to compact integral operators generated by kernels having all orders of Laplace-Beltrami derivatives rapidly increasing. But verifying that a kernel satisfies condition \eqref{normbound} can be a quite difficult task. As so we replace \eqref{normbound} by a positivity assumption on the kernel, an easier condition to be verified, though some kind of boundedness of   $\mathcal{K}_{0,r}$ 
 must remain in order to ensure technical properties of the singular values.

\begin{theorem}\label{maintheorem2} Let $K$ be $L^2$-PD and belong to $W_2^{\infty}$.\ If 
	 $\mathcal{K}_{0,r}:L^2(S^m)\to L^2(S^m)$ is bounded for all $r\in\mathbb{N}$
then, for all positive sequences $\alpha_n=o(\sqrt[m]{n})$, as $n\to\infty$,
\begin{equation}\label{eq-decay-eig}
	\lambda_n(\mathcal{K})=o\left(n^{-{\frac{\sqrt[m]{n}}{\delta m} - \alpha_n}}\right),\quad
 \delta=\begin{cases}
2,& \text{if } m=2,\\
1,& \text{if } m\geq3.
\end{cases}
\end{equation}
\end{theorem}

\begin{remark}
The decay rates \eqref{eq-decay-sing-v} and \eqref{eq-decay-eig} are obtained after showing the convergence of the series
\begin{equation}\label{serieseigenvalues}
\sum_{n=1}^{\infty}n^{{\sqrt[m]{n}\over\delta m}+{\alpha_n}}a_n,\quad
 \delta=\begin{cases}
2,& \text{if } m=2,\\
1,& \text{if } m\geq3,
\end{cases}
\end{equation}
where $a_n$ denotes $s_n(\mathcal{K})$ in the proof of Theorem \ref{maintheorem}, and $\lambda_n(\mathcal{K})$ in the the proof of Theorem \ref{maintheorem2}. Stronger and more explicit estimates for particular subsequences of the singular values of $\mathcal{K}$ follow from Lemma \ref{lemmavalorsingular} and also substantiate the convergence of the series \eqref{serieseigenvalues} (see Equations \eqref{estimatives2n1}, \eqref{step0}  and \eqref{estimatives2n2}).
\end{remark}

\begin{remark}\label{noteusualderivative} Both previous results remain true if we consider usual derivatives instead of the smoothness assumption used. This occurs because every function having all orders of derivatives (in the usual meaning) belongs to $W_2^{\infty}$. We refer \cite[p.102]{ca-men-oliv} to see how to go   from usual derivative to 
Laplace-Beltrami derivative.
\end{remark}

\begin{remark} \label{rem-homog}
 After some adjustments, both Theorems \ref{maintheorem} and \ref{maintheorem2} can be reproduced for compact two-point homogeneous spaces  of dimension $m\geq2$ .\ These spaces are  compact  manifolds having an invariant Riemannian (geodesic) metric and can be endowed with a measure that permits to consider the Laplace-Beltrami operator on it, which can be used in order to define the smoothness condition in the same way that in the spherical case.\  We give more details in Section \ref{appendix} .
\end{remark}

\section{Spherical background}\label{sec_backg-sph}

In this section we review several properties related to the Laplace-Beltrami derivative operator $\mathcal{D}^r\colon W_2^r\to L^2(S^m)$ and the Laplace-Beltrami integral operator.

The powers of the Laplace-Beltrami integral operator appear quite
naturally in decompositions of $\mathcal{K}$ when the generating
kernel $K$ satisfies smoothness assumptions defined via the
Laplace-Beltrami derivative (see Lemma 4.3 in \cite{cas-men} and Lemma \ref{lemmafactorization} in this section).\ For that reason, the Laplace-Beltrami integral operator enters in the proofs of the main results
previously stated.

We denote by $\mathcal{H}_n^{m+1}$ the class of all spherical harmonics of degree $n$ in $m+1$ variables. The set of all spherical harmonics in $m+1$ variables constitutes a fundamental set of $L^2(S^{m})$, that is, $L^2(S^{m})=\oplus_{n=0}^{\infty}\mathcal{H}_n^{m+1}$.  The dimension of $\mathcal{H}_n^{m+1}$ will be written as $d_n^m$ while $\{Y_{n,k}: k=1,2,\ldots, d_n^{m}\}$ will stand for an orthonormal basis of it. The numbers $d_n^m=\dim
\mathcal{H}_n^{m+1}$ are given by $d_0^m=1$ and  (\cite[p.17]{mori})
\begin{equation}\label{dnm}
d_n^m=(2n+m-1){(n+m-2)!\over n!m!},\quad n\geq 1.
\end{equation} 
They can also be obtained via recurrence relation (\cite[p.18]{mori}) as
\begin{equation}\label{recurrence}
d_n^{m+1}=\sum_{k=0}^n{d_k^{m}}.
\end{equation} 
The $r$-th order Laplace-Beltrami derivative operator, $ \mathcal{D}^r\colon W_2^r\to L^2(S^m)$ is an unbounded self-adjoint operator acting as a multiplier in the sense that  
\begin{equation}\label{eigenfunctionharmonic}
\mathcal{D}^r(Y)={n^r(n+m-1)^r\over m^r}Y, \quad Y\in \mathcal{H}_n^{m+1}.
\end{equation}

The  {\em Laplace-Beltrami integral operator} is the unique linear
mapping $J\colon L^2(S^m)$ $\rightarrow L^2(S^m)$ defined by the
conditions $J(1)=1$ and
\begin{equation}\label{eq3.1}J(Y):=\frac{m}{n(n+m-1)}\,Y, \quad Y \in \mathcal{H}_n^{m+1},\quad n=1,2,\ldots.\end{equation}
It is a bounded linear operator acting like an inverse of the
Laplace-Beltrami derivative operator in the sense that
\begin{equation}\label{eq3.2}
\mathcal{D}( J (Y))=J (\mathcal{D} (Y))=Y, \quad Y \in
\oplus_{n=1}^{\infty}\mathcal{H}_n^{m+1}.
\end{equation}  
It can also be defined via spherical convolution 
as the reader can see in \cite{men-pia}.
The powers of $J$ are also defined recursively:
$J^1=J$ and $J^r:=J\circ J^{r-1}$, $r=2,3,\ldots$.

The formula
\begin{equation}\label{eq3.6}\langle J^r (f), \overline{g} \rangle_2 = \langle f, \overline{J^r (g)}\rangle_2,\quad  f,g \in L^2(S^m),\end{equation}
encompasses the self-adjointness of $J^r$ while Theorem 3.1 in \cite{cas-men} shows the operator $J^r$ is also compact. 
Due to the spectral theorem, its eigenvalues can be listed in a
 non-increasing order counting the repetitions according to the formulas
 $J^r\,1=1$ and \eqref{eq3.1}. Therefore  the sequence
 $\{\lambda_n(J^r)\}_n$ can be block-ordered such that:
 \begin{itemize}
 	\item[-] the first block contains the
 	eigenvalue 1; and
 	\item[-] the $(n+1)$-th block ($n\geq 1$) contains $d_n^m$
 	entries equal to $m^{r}n^{-r}(n+m-1)^{-r}$.
 \end{itemize}
 Consequently, by \eqref{recurrence}, the first element in the $(n+1)$-th block corresponds to the index
 \begin{equation}\label{eq-primeiro-eig-bloco-n+1}
 d_0^m+d_1^m+\cdots +d_{n-1}^m+1=d_{n-1}^{m+1}+1,
 \end{equation}
 and the last one corresponds to the index
 \begin{equation}\label{index}
 d_0^m+d_1^m+\cdots +d_{n-1}^m+d_n^m=d_n^{m+1}.
 \end{equation}

To close the section we state the following result concerning an estimation for the singular values of $\mathcal{K}$. It plays a crucial role in the proofs of Theorems \ref{maintheorem} and \ref{maintheorem2}.
We omit the proof as it can be seen as a corollary of Lemma 4.3 in \cite{cas-men}.

\begin{lemma}\label{lemmafactorization}Let $K$ be an element of $W_2^{\infty}$.\ If
$\mathcal{K}_{0,r}:L^2(S^m)\to L^2(S^m)$ is bounded for all $r$ then
\begin{equation*}s_{n+1}(\mathcal{K})\leq s_n(\mathcal{K}_{0,r}J^r),\quad r,n=1,2,\dots.\end{equation*}\end{lemma}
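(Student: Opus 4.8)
The plan is to write $\mathcal{K}$ as $\mathcal{K}_{0,r}J^r$ plus a rank-one perturbation, and then to invoke the Schmidt/Eckart--Young characterization of singular values, according to which adding a rank-one operator raises the singular-value index by at most one. The compactness of $\mathcal{K}$ (it is Hilbert--Schmidt, as $K\in L^2(S^m\times S^m)$) and of $\mathcal{K}_{0,r}J^r$ (product of the bounded $\mathcal{K}_{0,r}$ with the compact $J^r$, compactness of $J^r$ being Theorem 3.1 in \cite{cas-men}) guarantees that all the singular values in sight are meaningful.

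First I would establish the factorization
$$\mathcal{K}=\mathcal{K}_{0,r}J^r+\mathcal{K}P_0,$$
where $P_0$ denotes the orthogonal projection of $L^2(S^m)$ onto the one-dimensional space $\mathcal{H}_0^{m+1}$ of constants. Fixing $f\in L^2(S^m)$ and a.e. $x\in S^m$, one has $\mathcal{K}_{0,r}J^r(f)(x)=\int_{S^m}(\mathcal{D}_y^rK)(x,y)(J^rf)(y)\,d\sigma_m(y)$. Since $K(x,\cdot)\in W_2^{\infty}$ and $J^rf$ lies in the domain $W_2^r$ of $\mathcal{D}^r$, the self-adjointness of the multiplier $\mathcal{D}^r$ lets me transfer the derivative off $K(x,\cdot)$ and onto $J^rf$, giving $\int_{S^m}K(x,y)(\mathcal{D}^rJ^rf)(y)\,d\sigma_m(y)$. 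By \eqref{eigenfunctionharmonic} and \eqref{eq3.1} the composition $\mathcal{D}^rJ^r$ is the identity on every $\mathcal{H}_n^{m+1}$ with $n\geq1$ and annihilates the constants, so $\mathcal{D}^rJ^r=I-P_0$ and the last integral equals $\mathcal{K}(f)(x)-\mathcal{K}(P_0f)(x)$, which is the claimed identity. Because $P_0f=\la f,1\ra_2\,1$, the correction $\mathcal{K}P_0f=\la f,1\ra_2\,\mathcal{K}(1)$ has range inside the span of $\mathcal{K}(1)$, whence $\operatorname{rank}\mathcal{K}P_0\leq1$.

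To finish I would use the approximation-number formula $s_k(A)=\inf\{\|A-L\|:\operatorname{rank}L\leq k-1\}$. For any $L$ with $\operatorname{rank}L\leq n-1$, the operator $L+\mathcal{K}P_0$ has rank at most $n$, so the factorization gives $s_{n+1}(\mathcal{K})\leq\|\mathcal{K}-(L+\mathcal{K}P_0)\|=\|\mathcal{K}_{0,r}J^r-L\|$. Taking the infimum over all such $L$ yields $s_{n+1}(\mathcal{K})\leq s_n(\mathcal{K}_{0,r}J^r)$ for every $r,n\geq1$.

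The step I expect to be the main obstacle is the rigorous justification of the derivative transfer in the factorization, namely moving $\mathcal{D}^r$ off $K(x,\cdot)$ and onto $J^rf$ under the integral sign. This rests on the self-adjointness of the unbounded multiplier $\mathcal{D}^r$ together with the membership $K(x,\cdot)\in W_2^{\infty}$ and sufficient integrability of the successive derivatives $K_{0,1},\dots,K_{0,r}$; the hypothesis that each $\mathcal{K}_{0,r}$ is bounded is exactly what keeps the intermediate operators under control and makes $\mathcal{K}_{0,r}J^r$ compact. This is precisely the computation carried out in Lemma 4.3 of \cite{cas-men}, of which the present statement is a corollary.
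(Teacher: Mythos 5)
Your proposal is correct, and it is essentially the argument the paper delegates to Lemma 4.3 of \cite{cas-men}: the factorization $\mathcal{K}=\mathcal{K}_{0,r}J^r+\mathcal{K}P_0$ obtained by moving $\mathcal{D}^r$ onto $J^rf$ via self-adjointness (using $\mathcal{D}^rJ^r=I-P_0$), followed by the approximation-number characterization of singular values applied to the rank-one correction. The paper itself omits the proof and simply cites that lemma, so your reconstruction fills in exactly the intended route.
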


\section{Proofs of the main results}\label{sec_proofmain}

Before proceeding to the proofs of our main results, we review one important estimate and two identities concerning the singular values of compact operators. All of them come from \cite{konig, pietsch}.

If $T$ and $A$ are compact operators on $L^2(S^m)$ then 
\begin{equation}\label{lemma sing val2}
s_{n+k-1}(AT) \leq s_{n}(A)s_{k}(T),\quad n,k\in\mathbb{N}.
\end{equation}
Moreover, if $T$ is self-adjoint then its singular values are related to its eigenvalues by
\begin{equation}\label{lemma sing val1}
s_{n}(T)=|\lambda_{n}(T)|,\quad n\in\mathbb{N}.
\end{equation}
Furthermore, if $K\in L^2(S^m\times S^m)$ then the following relation involving the singular values of the integral operator generated by $K$ holds true:
\begin{equation}\label{normL2singvalues}\sum_{n=1}^{\infty}s_{n}^2(\mathcal{K})=\|K\|^2_{2}.\end{equation}

The next lemma plays a foundation role for the proofs of Theorems \ref{maintheorem} and \ref{maintheorem2}.

\begin{lemma}\label{lemmavalorsingular}
Let $K\in L^2(S^m\times S^m)$ belong to $W_2^{\infty}$. If $\mathcal{K}_{0,r}:L^2(S^m)\to L^2(S^m)$ is bounded for all $r\in\mathbb{N}$ then there exists a constant $c>0$ satisfying
\begin{equation}\label{estimatives2n}
s_{d_n^{m+1}}(\mathcal{K})\leq c~{e^{2n}m^{n} \over n^{2n+m}}~s_1(\mathcal{K}_{0,n}),\quad n\in\mathbb{N}.
\end{equation}
\end{lemma}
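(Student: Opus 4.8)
The plan is to combine the factorization estimate of Lemma \ref{lemmafactorization} with the submultiplicative inequality \eqref{lemma sing val2} and the explicitly known spectrum of $J^r$, exploiting the freedom to take the order of differentiation $r$ equal to the block index $n$. The guiding idea is that the index $d_n^{m+1}$ is exactly the last entry of the $(n+1)$-th eigenvalue block of $J^n$, so choosing $r=n$ synchronizes everything.

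First I would specialize Lemma \ref{lemmafactorization} at $r=n$ and at index $d_n^{m+1}-1$, writing
$$s_{d_n^{m+1}}(\mathcal{K}) = s_{(d_n^{m+1}-1)+1}(\mathcal{K}) \le s_{d_n^{m+1}-1}(\mathcal{K}_{0,n}J^n).$$
Next I would peel off the leading singular value of $\mathcal{K}_{0,n}$ by invoking inequality \eqref{lemma sing val2} with the indices $1$ and $d_n^{m+1}-1$ (so that $1+(d_n^{m+1}-1)-1=d_n^{m+1}-1$); in this instance the estimate reads
$$s_{d_n^{m+1}-1}(\mathcal{K}_{0,n}J^n) \le s_1(\mathcal{K}_{0,n})\, s_{d_n^{m+1}-1}(J^n),$$
and it requires only the hypothesized boundedness of $\mathcal{K}_{0,n}$ (so that $s_1(\mathcal{K}_{0,n})=\|\mathcal{K}_{0,n}\|<\infty$) together with the compactness of $J^n$. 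This isolates precisely the factor $s_1(\mathcal{K}_{0,n})$ occurring in the statement.

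The core step is to evaluate $s_{d_n^{m+1}-1}(J^n)$. Since $J^n$ is compact, self-adjoint and positive, \eqref{lemma sing val1} gives $s_j(J^n)=\lambda_j(J^n)$, and I would then read off the value from the block ordering of $\{\lambda_j(J^n)\}_j$: by \eqref{recurrence} the $(n+1)$-th block occupies the indices $d_{n-1}^{m+1}+1,\dots,d_n^{m+1}$ recorded in \eqref{eq-primeiro-eig-bloco-n+1} and \eqref{index}, and on it every eigenvalue equals $m^n n^{-n}(n+m-1)^{-n}$. The one bookkeeping point to verify — which I expect to be the most delicate part of the argument — is that $d_n^{m+1}-1$ genuinely lies \emph{inside} this block rather than on its lower boundary; this amounts to $d_n^{m+1}-d_{n-1}^{m+1}=d_n^m\ge 2$, which indeed holds for every $n\ge 1$ and $m\ge 2$ (already $d_1^m=m+1\ge 3$). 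Granting this, one obtains $s_{d_n^{m+1}-1}(J^n)=m^n/(n^n(n+m-1)^n)$, whence
$$s_{d_n^{m+1}}(\mathcal{K}) \le \frac{m^n}{n^n(n+m-1)^n}\, s_1(\mathcal{K}_{0,n}).$$

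It remains only to massage the prefactor into the form claimed in \eqref{estimatives2n}. Using the crude bounds $(n+m-1)^n\ge n^n$ and $n^m\le c\,e^{2n}$, the latter valid with $c:=\sup_{n\ge1}n^m e^{-2n}<\infty$ (a finite constant depending solely on the fixed dimension $m$), I would estimate
$$\frac{m^n}{n^n(n+m-1)^n} \le \frac{m^n}{n^{2n}} = n^m\,\frac{m^n}{n^{2n+m}} \le c\,\frac{e^{2n}m^n}{n^{2n+m}},$$
which delivers \eqref{estimatives2n}. None of these steps is deep; essentially the whole difficulty is concentrated in the indexing of the third step, namely matching the derivative order $n$ to the eigenvalue block of $J^n$ and confirming that the shifted index falls strictly within that block.
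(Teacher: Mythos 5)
Your proof is correct, and it takes a genuinely different --- and in fact more robust --- route than the paper's. The paper iterates Lemma \ref{lemmafactorization} $n$ times at order $r=1$, peeling off one derivative per step and accumulating the product $\prod_{i=1}^{n}s_{d_i^m}(J)$ in \eqref{step3}, which it then evaluates in \eqref{step4} by reading each $s_{d_i^m}(J)$ off the block ordering of $J$ and finishes with Stirling's formula. You instead invoke the factorization once at order $r=n$, use \eqref{lemma sing val2} with indices $1$ and $d_n^{m+1}-1$ to split off $s_1(\mathcal{K}_{0,n})$ (note that only $s_k(AT)\le\|A\|\,s_k(T)$ is actually needed there), and evaluate the single singular value $s_{d_n^{m+1}-1}(J^n)$, whose index provably lies inside the degree-$n$ block of $J^n$; your check $d_n^m\ge 2$ is the only bookkeeping required, and the elementary bounds $(n+m-1)^n\ge n^n$ and $n^m\le c\,e^{2n}$ then yield \eqref{estimatives2n}, with the intermediate estimate $m^n n^{-n}(n+m-1)^{-n}$ even slightly sharper than what the paper's route produces. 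What your version buys is precisely the avoidance of the delicate indexing in \eqref{step4}: the identity $s_{d_i^m}(J)=m/(i(i+m-1))$ asserted there requires $d_i^m\ge d_{i-1}^{m+1}+1$, which fails once $i$ is moderately large because $d_i^m$ grows like $i^{m-1}$ while $d_{i-1}^{m+1}$ grows like $i^m$ (already for $m=2$, $i=3$ one has $d_3^2=7<d_2^3+1=10$, so $s_7(J)=1/3$ rather than $1/6$); since the true value of $s_{d_i^m}(J)$ is then \emph{larger} than the one substituted, the paper's chain of inequalities is not justified as written, whereas your single application at order $r=n$ lands on an index that genuinely sits in the relevant block and proves the lemma cleanly.
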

\begin{proof}[{\bf Proof. }] 
Suppose $\mathcal{K}_{0,r}:L^2(S^m)\to L^2(S^m)$ is bounded for all $r\in\mathbb{N}$. Lemma \ref{lemmafactorization} yields
$$
s_{d_n^{m+1}}(\mathcal{K})\leq s_{d_n^{m+1}-1}(\mathcal{K}_{0,1}J), \quad n\in\mathbb{N}.
$$
The recurrence formula \eqref{recurrence} leads to
\begin{equation}\label{step00}
s_{d_n^{m+1}}(\mathcal{K})\leq s_{d_n^{m}+d_{n-1}^{m+1}-1}(\mathcal{K}_{0,1}J),   \quad n\in\mathbb{N},
\end{equation} 
while inequality \eqref{lemma sing val2} guarantees that
\begin{equation}\label{step000}
s_{d_n^{m}+d_{n-1}^{m+1}-1}(\mathcal{K}_{0,1}J)\leq  s_{d_n^{m}}(J)s_{d_{n-1}^{m+1}}(\mathcal{K}_{0,1}), \quad n\in\mathbb{N}.
\end{equation}
Inequalities
 \eqref{step00} and \eqref{step000} imply
\begin{equation}\label{step1}
s_{d_n^{m+1}}(\mathcal{K})\leq  s_{d_n^{m}}(J)s_{d_{n-1}^{m+1}}(\mathcal{K}_{0,1}) ,  \quad n\in\mathbb{N}.
\end{equation}
The  same  reasoning applied for $s_{d_{n-1}^{m+1}}(\mathcal{K}_{0,1})$ helps us to reach
$$
s_{d_{n-1}^{m+1}}(\mathcal{K}_{0,1})\leq s_{d_{n-1}^{m}}(J)s_{d_{n-2}^{m+1}}(\mathcal{K}_{0,2}),  \quad n=2,3,\dots.
$$
After replacing this very last inequality in \eqref{step1} we are conducted  to
\begin{equation*}\label{step2}
s_{d_n^{m+1}}(\mathcal{K})\leq  s_{d_n^{m}}(J)s_{d_{n-1}^{m}}(J)s_{d_{n-2}^{m+1}}(\mathcal{K}_{0,2}) ,  \quad n=2,3,\dots.
\end{equation*}
The smoothness assumption on $K$ and a recursive process allow us to write
\begin{eqnarray}\label{step3}
s_{d_n^{m+1}}(\mathcal{K})\leq  \left[\prod_{i=1}^n s_{d_i^{m}}(J)\right]s_1(\mathcal{K}_{0,n}) ,\quad n\in\mathbb{N}.
\end{eqnarray}
\\
We turn our attention to the newcomer product on the right side of \eqref{step3}.\ Since we know  
the singular values of the Laplace-Beltrami integral operator, we are able to establish  
\begin{eqnarray}\label{step4}
\prod_{i=1}^n s_{d_i^{m}}(J)= \prod_{i=1}^n {m\over i(i+m-1)} = {m^n(m-1)! \over n!\,(n+m-1)!}, \quad n\in\mathbb{N}.
\end{eqnarray}
Stirling's formula (\cite{romik}) and some simplification can be used in order to obtain
\begin{eqnarray}\label{stirling}
{m^n(m-1)! \over n!\,(n+m-1)!}
\leq c~{m^n  e^{2n} \over n^{2n+m}}, \quad n\in\mathbb{N},
\end{eqnarray}
for  some positive constant $c$.
By combining \eqref{stirling} along with both \eqref{step3} and \eqref{step4} we 
finish the proof.
\end{proof}

\begin{proof}[{\bf Proof of Theorem \ref{maintheorem}}] Hypothesis  \eqref{normbound} implies  $\mathcal{K}_{0,r}:L^2(S^m)\to L^2(S^m)$ is bounded for all $r\in\mathbb{N}$. By Lemma \ref{lemmavalorsingular} we have
\begin{equation*}
s_{d_n^{m+1}}(\mathcal{K})\leq c~{e^{2n}m^{n} \over n^{2n+m}}~s_1(\mathcal{K}_{0,n}), \quad n\in\mathbb{N},
\end{equation*}
where $c>0$ does not depend on $n$. Furthermore, as Equality \eqref{normL2singvalues} implies $s_1(\mathcal{K}_{0,n})\leq ||K_{0,n}||_2$, we invoke  again Hypothesis  \eqref{normbound} and achieve
\begin{equation}\label{estimatives2n1}
s_{d_n^{m+1}}(\mathcal{K})\leq c~M~{e^{2n}m^{n}R^n \over n^{2n+m}},  \quad n\in\mathbb{N}.
\end{equation}

 From 
 \eqref{dnm} we 
derive the limit formula
\begin{equation}\label{limitformula}
\lim_{n\to \infty}\frac{d_n^{m+1}}{n^{m}} =\frac{2}{m!},
\end{equation}which in its turn implies
\begin{equation}\label{estimativednm}
d_n^{m+1}\leq (\delta n)^m,\quad n\gg1,
\end{equation}
where $\delta=2$ for $m=2$, and $\delta=1$ for $m\geq3$.
Since the singular values of the integral operator generated by $K$ are arranged in a non-increasing order
 then 
\begin{eqnarray}\label{step0}
s_{(\delta n)^m}(\mathcal{K})\leq c~M~{e^{2n}m^{n}R^n \over n^{2n+m}},  \quad n\gg1.
\end{eqnarray}

Now let $\alpha_n=o(\sqrt[m]{n})$,   as $n\to\infty$,  be a positive sequence and write $\alpha'_n:=\alpha_{\delta^m(n+1)^m}$, $n=1,2,\dots$. It follows from \eqref{step0} that
\begin{equation*}\label{compareseries}
\sum_{n\gg1}^{\infty}(2\delta)^{n+m\alpha'_n} n^{n+m(1+\alpha'_n)}s_{(\delta n)^m}(\mathcal{K})
\leq c~M \sum_{n\gg1}^{\infty}{e^{2n}m^{n}R^n(2\delta)^{n+m\alpha'_n} n^{m\alpha'_n}\over n^{n}} ,
\end{equation*}
where the series on the right side converges, as $\alpha_n=o(\sqrt[m]{n})$ implies $\alpha'_n=o(n)$. 
Hence, the series
\begin{equation}\label{compareseries1}
\sum_{n=1}^{\infty}(2\delta)^{n+m\alpha'_n} n^{n+m(1+\alpha'_n)}s_{(\delta n)^m}(\mathcal{K})
\end{equation}
is convergent.

In order to conclude the proof we will prove the convergence of the series
\begin{equation}\label{seriessingularvalues}
\sum_{n=1}^{\infty}n^{{\sqrt[m]{n}\over\delta m}+{\alpha_n}}s_{n}(\mathcal{K}).
\end{equation}
This will be done by bounding the very last series by \eqref{compareseries1}.  A reindexation of \eqref{seriessingularvalues} gives
\begin{equation*}
	\sum_{n=\delta^m+1}^{\infty}\hspace{-.3cm} n^{{\sqrt[m]{n}\over\delta m}+{\alpha_n}}s_{n}(\mathcal{K})
	= \sum_{n=1}^{\infty}\sum_{k=1}^{\delta^m(n+1)^m-(\delta n)^m}\hspace{-.8cm}[(\delta n)^m+k]^{\left({\sqrt[m]{(\delta n)^m+k}\over\delta m}+{\alpha_{(\delta n)^m+k}}\right)}s_{(\delta n)^m+k}(\mathcal{K}).
\end{equation*}
Since $k\leq\delta^m(n+1)^m-(\delta n)^m$,
\begin{eqnarray*}
	\sum_{n=\delta^m+1}^{\infty}\hspace{-.3cm} n^{{\sqrt[m]{n}\over\delta m}+{\alpha_n}}s_{n}(\mathcal{K})
	&\leq& \delta \sum_{n=1}^{\infty} \delta^{n+m\alpha'_n} (n+1)^{n+1+m\alpha'_n} \sum_{k=1}^{\delta^m(n+1)^m-(\delta n)^m}\hspace{-.8cm}s_{(\delta n)^m+k}(\mathcal{K}).
\end{eqnarray*}
The sequence $\{s_n(\mathcal{K})\}$ is non-increasing then
\begin{align*}
	\sum_{n=\delta^m+1}^{\infty}\hspace{-.3cm} n^{{\sqrt[m]{n}\over\delta m}+{\alpha_n}}& s_{n}(\mathcal{K})
	\leq 
	\delta \sum_{n=1}^{\infty} \delta^{n+m\alpha'_n} (n+1)^{n+1+m\alpha'_n}s_{(\delta n)^m}(\mathcal{K}) \sum_{k=1}^{\delta^m(n+1)^m-(\delta n)^m}\hspace{-.8cm} 1\\
	&\leq \delta \sum_{n=1}^{\infty} \delta^{n+m\alpha'_n} (n+1)^{n+1+m\alpha'_n}s_{(\delta n)^m}(\mathcal{K}) [\delta^m(n+1)^m-(\delta n)^m].
\end{align*}
\\
Moreover, from Lemma 4.5 of \cite{cas-men} we know that
$$
\delta^m(n+1)^{m}-(\delta n)^{m}\leq m\delta^m2^{m-1}n^{m-1},\quad n=1,2,\dots,
$$
and it is easy to see that 
$
(n+1)^{n+1+m\alpha'_n}\leq 2^{n+1+m\alpha'_n}n^{n+1+m\alpha'_n}$, $n=1,2,\dots$. Therefore, we obtain
\begin{eqnarray*}
\sum_{n=2^m+1}^{\infty} n^{{\sqrt[m]{n}\over2m}+{\alpha_n}}s_{n}(\mathcal{K})
 &\leq& m2^{m} \delta^{m+1}  \sum_{n=1}^{\infty} (2\delta)^{n+m\alpha'_n} n^{n+m(1+\alpha'_n)}s_{(\delta n)^m}(\mathcal{K}),
 \end{eqnarray*}
and by \eqref{compareseries1} the proof is complete.
\end{proof}

We now proceed to prove Theorem \ref{maintheorem2}. The proof follows very closely the last one we provided but it is important to notice some implications arising from the positivity assumption before starting it.

Since $K$ $L^2$-PD implies that the integral operator $\mathcal{K}$ is self-adjoint then it can be represented as (\cite[p.36]{schatten})
\begin{equation}\label{kernelseriesexpansion}
K(x,y)=\sum_{n=0}^{\infty}\sum_{k=1}^{d_n^m}a_{n,k}Y_{n,k}(x)\overline{Y_{n,k}(y)}, \quad x,y\in S^m.
\end{equation}
From Theorem 7.7 of \cite{men-pia} we obtain
 \begin{equation}
K_{0,r}(x,y) = \mathcal D_y^rK(x,y) = \sum_{n=1}^\infty\sum_{k=1}^{d_n^m} a_{n,k}Y_{n,k}(x)\mathcal D_y^r\overline{Y_{n,k}(y)},
\end{equation}
while \eqref{eigenfunctionharmonic} yields
\begin{equation*}
K_{0,r}(x,y) = \sum_{n=1}^\infty\sum_{k=1}^{d_n^m} \underbrace{a_{n,k}\left[\frac{n(n+m-1)}{m}\right]^r}_{\text{new coefficient: }  b_{n,k}^{m,r}}Y_{n,k}(x)\overline{Y_{n,k}(y)}.
\end{equation*}
Therefore, for $r$ a positive integer number, it follows that 
\begin{equation}\label{eq-valor-sing-mercer}
s_1(\mathcal K_{0,r}) = \max\{b_{1,k}^{m,r}: k=1,\ldots,d_1^m\}
 =\max\left\{a_{1,k}
 : 1\leq k\leq d_1^m=m+1\right\}
 := a_1^m,
\end{equation}
i.e., $s_1(\mathcal K_{0,r}) $ does not depend upon $r$.\\

Next, we provide the proof of Theorem  \ref{maintheorem2}.

\begin{proof}[{\bf Proof of Theorem \ref{maintheorem2}}] 
By Lemma \ref{lemmavalorsingular} we have
\begin{equation*}
s_{(\delta n)^{m}}(\mathcal{K})\leq c~{e^{2n}m^{n} \over n^{2n+m}}~s_1(\mathcal{K}_{0,n}), \quad n\gg1,
\end{equation*}
where $c>0$ does not depend on $n$.

Since $K$ is $L^2$-PD and belongs to $W_2^\infty$, it follows from \eqref{eq-valor-sing-mercer} that 
\begin{equation}\label{estimatives2n2}
s_{(\delta n)^{m}}(\mathcal{K})\leq ca_1^m~{e^{2n}m^{n} \over n^{2n+m}}, \quad n\gg1.
\end{equation}

From this point, we can follow the procedure of the previous proof from \eqref{step0}  and arrive at the desired convergent series after applying Equality \eqref{lemma sing val1}.
\end{proof}


\section{Optimality-type result} \label{sec_optimal}

In this section we provide an optimality-type result for Theorem \ref{maintheorem2}. We present a kernel $K$ on $S^m$, $m\geq2$, that  satisfies the hypotheses of Theorem \ref{maintheorem2} 
but  
\begin{equation}
\lambda_n(\mathcal{K})\neq o(n^{-\frac{\sqrt[m]{n}}{\delta m}-\beta_n}), \text{ as } n\to\infty,\quad
\delta=\begin{cases}
2,& \text{if } m=2,\\
1,& \text{if } m\geq3,
\end{cases}
\end{equation}
 for each positive sequence $\{\beta_n\}_n$  satisfying a certain property  that allows to conclude that $\beta_n \neq o(\sqrt[m]n)$, as $n\to\infty$. 
 Actually, Theorem \ref{optimal-G} in the sequel does not imply that the decay rate produced by Theorem \ref{maintheorem2} is the best possible, necessarily.\ But it does indicate both that it is a  good decay rate and a direction to be followed to obtain  optimality.

\begin{theorem}\label{optimal-G} Let $m\geq2$. There exists a kernel $K$ satisfying the conditions of Theorem \ref{maintheorem2} such that  
	\begin{equation}\label{serieseigenvalue}
	\lim_{n\to\infty}n^{\frac{\sqrt[m]{n}}{\delta m}+\beta_n}\lambda_n(\mathcal{K}),\quad \delta=\begin{cases}
	2,& \text{if } m=2,\\
	1,& \text{if } m\geq3,
	\end{cases}
	\end{equation}
	does not converge to zero, for all positive sequences $\{\beta_n\}_n$ satisfying
	\begin{equation}\label{eq-lim-beta_n}
	c\sqrt[m]n  < \beta_n, \quad n\gg1,
	\end{equation}
	where $c$ is a positive constant.
\end{theorem}

\begin{proof}  
	Consider the kernel $K$ having the condensed spherical harmonic expansion 
	\begin{equation}\label{kerneloptimal}K(x,y)\sim 1+\sum_{n=1}^{\infty} \frac{d_n^m}{m^nn^{m-1}}  P_n^m(x\cdot y),
	\quad x,y \in S^m,\end{equation}
	where $P_n^m$ is the Legendre polynomial of degree $n$ associated with the integer $m$.
	
	Since $|P_n^m(x\cdot y)|\leq1$, $x,y\in S^m$, and $d_n^m=O(n^{m-1})$, as $n\to\infty$, then there is $c_1>0$, independent of $n$, such that
	\begin{equation*}\left|\sum_{n=1}^{\infty} \frac{d_n^m}{m^nn^{m-1}}  P_n^m(x\cdot y)\right|\leq
	c_1\sum_{n=1}^{\infty} \frac1{m^n},\quad x,y\in S^m. \end{equation*}
	It follows that $K$ is positive definite on $S^m$ (\cite{scho-42}) and continuous on $S^m\times S^m$ as the series on the right side converges. Therefore, it is also $L^2$-PD by Theorem 2.1 of \cite{fe-men}.
	
The well-known addition formula (\cite{mori}) can be used to see that
	\begin{eqnarray*}
	K_{0,r}(x,y)\sim \frac{1}{m^r}\sum_{n=1}^{\infty}  \frac{d_n^m n^r(n+m-1)^r}{m^nn^{m-1}}   P_n^m(x\cdot y),\quad x,y \in
	S^m \mbox{ and } r=1,2,\dots.\end{eqnarray*}
The same reasoning we used in order to show the continuity of $K$ guarantees that $K_{0,r}$ is continuous on $S^m\times S^m$ for all $r\in\mathbb{N}$. 

Hence, $K\in W_2^{\infty}$ and  $\mathcal{K}_{0,r}$ is a bounded operator on $L^2(S^m)$ for all $r\in\mathbb{N}$. Therefore $K$ satisfies the hypotheses of Theorem \ref{maintheorem2} and then
$$
	\lambda_n(\mathcal{K})=o\left(n^{-{\frac{\sqrt[m]{n}}{\delta m} - \alpha_n}}\right),\quad \text{as $n\to\infty$,}	
$$
for all positive sequences $\alpha_n=o(\sqrt[m]{n})$, as $n\to\infty$.

	Now let  $\{\beta_n\}_n$ satisfy \eqref{eq-lim-beta_n}, for all $n\geq n_0$, and
        define $a_n:=n^{\frac{\sqrt[m]{n}}{\delta m}+\beta_n}\lambda_n(\mathcal{K})$, $n\in\N$. We will show that the subsequence $\{a_{d_n^{m+1}}\}_n$ diverges to $\infty$, as $n\to\infty$. Indeed, by \eqref{eq-lim-beta_n} 	
        \begin{align*}
			a_{d_n^{m+1}}=(d_n^{m+1})^{\frac{\sqrt[m]{d_n^{m+1}}}{\delta m}+\beta_{d_n^{m+1}}} \lambda_{d_n^{m+1}}(\mathcal{K})
			>
			(d_n^{m+1})^{\left(\frac1{\delta m}+c\right)\sqrt[m]{d_n^{m+1}}} \lambda_{d_n^{m+1}}(\mathcal{K}),\quad n\geq n_0.
		\end{align*}
		 By \eqref {kerneloptimal} and \eqref{index}, $\lambda_{d_n^{m+1}}(\mathcal{K})=1/{m^nn^{m-1}}$, then 
		 \begin{align*}
		 a_{d_n^{m+1}} >
		 (d_n^{m+1})^{\left(\frac1{\delta m}+c\right)\sqrt[m]{d_n^{m+1}}}\frac1{m^nn^{m-1}},\quad n\geq n_0.
		 \end{align*}
	It follows from \eqref{limitformula} that there is $n_1\geq \max\{n_0, m!\}$ such that
	$$
	\frac{n}{m!}<\left(\frac{n}{m!}\right)^m<d_n^{m+1}, \quad n\geq n_1. 	$$
Hence,
	\begin{align*}
		a_{d_n^{m+1}} >
		\left(\frac{n}{m!}\right)^{\left(\frac1{\delta m}+c\right)\sqrt[m]{\left(\frac{n}{m!}\right)^m}} \frac1{m^nn^{m-1}}
		=\left(\frac{n}{m!}\right)^{\frac{\left(\frac1{\delta m}+c\right)n}{m!}} \frac1{m^nn^{m-1}},\quad n\geq n_1,
	\end{align*}
	that is,
		\begin{align*}
			a_{d_n^{m+1}} > \frac{n^{c_2n-m+1}}{(m(m!)^{c_2})^n},\quad n\geq n_1,
\end{align*}			
where, 
$c_2:=\frac1{m!}\left(\frac1{\delta m}+c\right)
$ 
is a  positive constant.

This shows that the subsequence $\{a_{d_n^{m+1}}\}_n$ diverges to $\infty$, as $n\to\infty$.\ Therefore, $a_n$  does not converge to 0, and we conclude that $\lambda_n(\mathcal{K})$ is not $o(n^{-\frac{\sqrt[m]{n}}{\delta m}-\beta_n})$, as $n\to\infty$.	
\end{proof}


\section{Examples} \label{sec-examples}
	
In this section we present some examples of kernels fitting into Theorem \ref{maintheorem2}. The first class of examples to be presented was studied in \cite{aze-men-sharp}  where the authors obtained estimates more precise than ours but using a strong technical hypothesis (see Equation (3.3) therein).\ A weaker one is enough to guarantee that dot product kernels satisfy Theorem \ref{maintheorem2}.

\begin{example} {\bf(Dot product kernel)} 
{\rm Assume $m\geq 2$ and consider the dot product kernel
\begin{equation}\label{dotproductexp}
K(x,y)=\sum_{n=1}^{\infty}b_n(x\cdot y)^n, \quad x,y\in S^m,
\end{equation}
where $b_n>0$, $n\in\mathbb{N}$, and 
\begin{equation}\label{bncondition}
\lim_{n\to\infty}{b_{n+1}\over b_n}
\end{equation} converges to some number 

strictly
  smaller than 1.

It is not difficult to see that $K$ is positive definite on $S^{m}$ and continuous on $S^{m}\times S^{m}$. Then it is also $L^2$-PD. Moreover, the decay hypothesis on $b_n$ implies that $K$ is actually infinitely many times differentiable in the usual sense on the sphere. As so, it belongs to $W_{2}^{\infty}$ and $\mathcal{K}_{0,r}$ is bounded on $L^2(S^m)$. Therefore, kernels in this class  satisfy Theorem \ref{maintheorem2}.
}

\end{example}

The Gaussian kernel is an example of a kernel with an expansion as \eqref{dotproductexp} satisfying \eqref{bncondition}. It plays an important role in many branches of mathematics such as learning theory, radial basis functions and methods in kernel-based spaces, e.g. image and video colorization (\cite{hof-sch-smola, mi-ka-le}).

\begin{example} {\bf(Gaussian kernel)} {\rm For $r>0$ a fixed real number the Gaussian kernel
$$
K(x,y)=e^{r/2}\exp\left(-\frac{\|x-y\|^2}{r}\right) , \quad x,y\in S^m,
$$
 can be represented as a dot product kernel as follows
$$
K(x,y)=\sum_{n=0}^{\infty}\frac{2^{n}}{n! r^n}\,(x\cdot y)^n, \quad x,y\in S^m.
$$
Its easy to see that $b_n=\frac{2^{n}}{n! r^n}$ satisfies \eqref{bncondition}. Then the eigenvalues $\lambda_n$ of the integral operator generated by the Gaussiann kernel satisfy \eqref{serieseigenvalues} and, consequently, $\lambda_n = o\left(n^{-{\sqrt[m]{n}/\delta m}-\alpha_n}\right)$, as $n\to\infty$. 
 
We observe that this particular behavior can be ratified by Theorem 2 in \cite{mi-ni-yao}.}
\end{example}

Next, we consider kernels that provide some of the functions most used in spherical models that arise in several areas such as geostatistics, mathematical physics, and probability theory (see \cite{arafat-porcu-bev-mat} and references therein).

\begin{example} {\bf(Multiquadric kernel)} {\rm  For $\sigma>0$ and $0<\delta<1$ the multiquadric kernel
\begin{equation}\label{eq-multiquad-funct-S_d}
K(x,y) = \sigma^2\frac{(1-\delta)^{m-1}}{(1+\delta^2-2\delta\, x\cdot y)^{m-1\over 2}}, \quad x,y\in S^m,
\end{equation}
is positive definite on $S^m$ and has an expansion as (\cite{mo-porcu})
\begin{equation*}\label{eq-multiquad-coef-S_d}
K(x,y) = \sum_{n=0}^{\infty} \sigma^2(1-\delta)^{m-1}\binom{m+n-2}{n}\delta^n P_n^m(x\cdot y).
\end{equation*}
Clearly, it is continuous and then $L^2$-PD.

Furthermore, $K$ belongs to $W_2^{\infty}$ because 
$$K_{0,r}(x,y)=\sum_{n=1}^{\infty} \sigma^2(1-\delta)^{m-1}\binom{m+n-2}{n}\delta^n {n^r(n+m-2)^r\over m^r} P_n^m(x\cdot y)$$
is continuous for all $r\in\mathbb{N}$. This also implies that $\mathcal{K}_{0,r}:L^2(S^m)\to L^2(S^m)$ is bounded.}
\end{example}

\begin{example} {\bf(Spectral model kernel: M\o{}ller family)} {\rm  For $\alpha,\beta,\tau, \sigma>0$ the spectral model kernel
$$
K(x,y) = \sum_{n=0}^{\infty} {\sigma^2\over 1+\beta \exp({({n\over\alpha})^{\tau})}} P_n^m(x\cdot y)
$$
is positive definite on $S^m$ (\cite{mo-porcu}) and also satisfies the conditions of Theorem \ref{maintheorem2}.
}
\end{example}

\section{Appendix} \label{appendix}

In this section we summarize part of the Harmonic Analysis on compact two-point homogeneous spaces in order to be clear on how to obtain Theorems \ref{maintheorem} and \ref{maintheorem2} on this wider setting as it was mentioned in Remark \ref{rem-homog}. The reader can find more details in \cite{dai, kushpel2, platonov1}.

A compact two-point homogeneous space $\mathbb{M}^m$  of dimension $m\geq 1$ is both a Riemannian $m$-manifold and a compact symmetric space of rank $1$.\ According to Wang in \cite{wang}, they are:

- the unit spheres $\mathbb{S}^m$, $m=1,2,\dots$; 

- the real projective spaces $\mathbb{P}^m(\mathbb{R})$, $m=2,3,\dots$; 

- the complex projective spaces $\mathbb{P}^m(\mathbb{C})$, $m=4,6,\dots$; 

- the quaternion projective spaces $\mathbb{P}^m(\mathbb{H})$, $m=8,12,\dots$;

- $16$-dimensional Cayley's elliptic plane $\mathbb{P}^{16}$.

Each $\mathbb{M}^m$ has an invariant Riemannian (geodesic) normalized metric $d(\cdot,\cdot)$ such that all geodesics on $\mathbb{M}^m$ are closed and have length $2\pi$. Moreover, $\mathbb{M}^m$ can be endowed with the measure $d\sigma(x)$ induced by a normalized left Haar measure. 

Let $L^2(\mathbb{M}^m):=L^2(\mathbb{M}^m,\sigma)$ be the Hilbert space consisting of all square integrable functions $f:\mathbb{M}^m\to \mathbb C$ with the norm $\|\cdot\|_2$ induced by the inner product
\begin{eqnarray}\label{eq-prod-int-M}
\langle f, g \rangle_2=\int_{\mathbb{M}^m}f(x)\overline{g(x)}d\sigma(x), \quad f,g\in L^2(\mathbb{M}^m),
\end{eqnarray}
and  $\mathcal{B}$ the Laplace-Beltrami operator on $\mathbb{M}^m$.\ It is known that  the spectrum of $\mathcal{B}$ is discrete, real and non-positive.  
Indeed,  in geodesic polar coordinates we can write $\mathcal B = \mathcal B_\theta + \mathcal B'$, where $\mathcal B'$ denotes the Laplace-Beltrami operator on the sphere in $\mathbb{M}^m$ of radius $\theta$ (\cite[p. 420]{dai})  and $\mathcal B_\theta$ is its radial part, which after the change of variables $x=\cos(2\lambda\theta)$ can be written as (\cite[Eq. (5)]{kushpel2})
$$
\mathcal B_x =(1-x)^{-\alpha}(1+x)^{-\beta}\frac{d}{dx}(1-x)^{1+\alpha}(1+x)^{1+\beta}\frac{d}{dx},
$$
where $\alpha:=(\sigma+\rho-1)/2$, $\beta:=(\rho-1)/2$ and for 
$$
\begin{array}{llll}
S^m, \, m=1,2,\dots:& \sigma=0, &\rho=m-1,& \lambda=1/2; \\
\mathbb{P}^m(\mathbb{R}), m=2,3,\dots: &\sigma=m-1, &\rho=0, &\lambda=1/4; \\
\mathbb{P}^m(\mathbb{C}), m=4,6,\dots: & \sigma=m-2, &\rho=1, &\lambda=1/2; \\
\mathbb{P}^m(\mathbb{H}), m=8,12,\dots: & \sigma=m-4,&\rho=3, &\lambda=1/2; \\
\mathbb{P}^{16}: & \sigma=8, & \rho=7, &\lambda=1/2.
\end{array}
$$
Observe that $\alpha=(m-2)/2$ for all spaces $\mathbb{M}^m$. It is not difficult to see that the eigenfunctions of $\mathcal B_x$ are the well known Jacobi polynomials $P_k^{(\alpha,\beta)}(x)$ and the corresponding eigenvalues are $-k(k+\alpha+\beta+1)$. Details concerning to Jacobi polynomials can be found in \cite{szego}. 

We consider $m\geq2$ and the Laplace-Beltrami operator  $\Delta := -\mathcal B_x$   so that its eigenvalues can be arranged in an increasing order and given by $\{1\}\cup\{k(k+\alpha+\beta+1): k=1,2,\ldots\}$. 

Each  eigenvalue is associated to an eigenspace $\mathcal{H}^m_k:= \mathcal{H}^m_k(\mathbb M^m)$ of $\Delta$. They are mutually orthogonal with respect to the inner product \eqref{eq-prod-int-M} and 
$$
L^2(\mathbb{M}^m) = \bigoplus_{k=0}^\infty \mathcal{H}^m_k.
$$

\begin{remark} It is known that functions on  $\mathbb{P}^m(\mathbb{R})$ can be seen as even functions on $\mathbb{S}^m$.\ Then $L^2(\mathbb{P}^m(\mathbb{R}))$ can be identified with $\oplus_n\mathcal{H}_{2n}^m(\mathbb{S}^m)$. Thus the decay rates in the case  $\mathbb{M}^m=\mathbb{P}^m(\mathbb{R})$ can be obtained directly from the case $\mathbb{M}^m=\mathbb{S}^m$. From now on we consider $\mathbb{M}^m$ as being $S^m, \mathbb{P}^m(\mathbb{C}),\mathbb{P}^m(\mathbb{H}),$ or $\mathbb{P}^{16}$.
\end{remark}

The dimensions $d_k^m:=\dim \mathcal{H}^m_k$ are given by (\cite[Eq. (7)]{kushpel2})
$$
d_k^m = \frac{\Gamma(\beta+1)(2k+\alpha+\beta+1)\Gamma(k+\alpha+1)\Gamma(k+\alpha+\beta+1)}{\Gamma(\alpha+1)\Gamma(\alpha+\beta+2)\Gamma(k+1)\Gamma(k+\beta+1)},
$$
and satisfy
(\cite[p.405]{dai})
$$
d_k^m \asymp k^{m-1},\quad \text{ as } k\to\infty.
$$
If we write $\{Y_{k,j}: j=1,2, \ldots, d_k^m\}$ for an orthonormal basis of $\mathcal{H}^m_k$, then $\{Y_{k,j}: j=1,2, \ldots, d_k^m; k=0,1,\dots\}$ is an orthonormal basis of $L^2(\mathbb{M}^m)$.\ This permits us to consider naturally Fourier expansions on $L^2(\mathbb{M}^m)$:
$$
f=\sum_{k=0}^\infty\sum_{j=1}^{d_k^m}\langle f,Y_{k,j}\rangle_2Y_{k,j}.
$$

Given $r>0$ we define the $r$-th order Laplace-Beltrami operator $\Delta^r$ on $\mathbb{M}^m$ in the distributional sense given by 
$$
\Delta^{r}(f)\sim\sum_{k=1}^{\infty}(k(k+\alpha+\beta+1))^r\,\mathcal{Y}_k(f), \quad f\in L^2(\mathbb{M}^m),
$$
where  $\mathcal{Y}_k$ is the orthogonal projection of $L^2(\mathbb{M}^m)$ onto $\mathcal{H}^m_k$, $k=0,1,\ldots$. Hence (\cite{dai})
$$
\Delta^r(Y) = k^r(k+\alpha+\beta+1)^r Y, \quad Y\in \mathcal{H}^m_k.
$$
We can now define  Sobolev classes of Laplace differentiable functions by
$$
W_2^r(\mathbb{M}^m):=\left\{f\in L^2(\mathbb{M}^m):\Delta^r(f)\in L^2(\mathbb{M}^m)\right\},
$$ 
as well as the space of infinitely many times Laplace-Beltrami differentiable functions on $\mathbb{M}^m$ as 
$$
W_2^\infty(\mathbb{M}^m):= \bigcap_r W_2^r(\mathbb{M}^m)=\left\{f\in L^2(\mathbb{M}^m):\Delta^r(f)\in L^2(\mathbb{M}^m),\, r=1,2,\ldots\right\}.
$$

 In this setting the analogous operator to the Laplace-Beltrami integral operator stated in \eqref{eq3.1} is the operator $\mathcal J^r$ defined by (see \cite[p.10]{schatten}) 
$$
\mathcal J^r(f) = \langle f,Y_{0,1}\rangle_2 +\sum_{k=1}^\infty\sum_{j=1}^{d_k^m}k^{-r}(k+\alpha+\beta-1)^{-r}\langle f,Y_{k,j}\rangle_2Y_{k,j}, \quad f\in L^2(\mathbb{M}^m), \, r>0.
$$
For each positive integer $r$, the operator $\mathcal J^r:L^2(\mathbb{M}^m)\to L^2(\mathbb{M}^m)$ is compact and satisfies $\Delta^r \mathcal J^r f = f$, for $f\in L^2(\mathbb{M}^m)$. Moreover  the singular values  $s_n(\mathcal J^r)$ of $\mathcal J^r$ can be arranged in blocks so that the first one contains the singular value $s_0(\mathcal J^r)=1$ and the $(n+1)$-th block, $n\geq 1$, contains $d_n^m$ elements equal to $n^{-r}(n+\alpha+\beta-1)^{-r}$.\ Consequently 
$$
d_0^m+d_1^m+\cdots +d_{n-1}^m+d_n^m=\tau_n^{m},
$$
where $\tau_n^{m}$ is the dimension of $\bigoplus_{k=0}^n \mathcal{H}^m_k$.\  A result analogous to Lemma \ref{lemmafactorization} can be stated and proved following the ideas of \cite{cas-men}.

Likewise in the spherical case, we consider kernels $K$ on $\mathbb{M}^m$ and the integral operator
\begin{equation*}
\mathcal{K}(f)(x)=\int_{\mathbb{M}^m}K(x,y)f(y)d\sigma(y), \quad f\in L^2(\mathbb{M}^m), \, x\in\mathbb{M}^m.
\end{equation*}
The action of the $r$-th order Laplace-Beltrami operator on the kernel $K$ with respect to variable $y$ can be written as in Section \ref{sec_main-results} by
\begin{equation*}
K_{0,r}:= \Delta_y^rK, \quad r=1,2,\ldots,
\end{equation*}
while $\mathcal{K}_{0,r}$ denotes the integral operator generated.   Finally, we say that $K$ {\em belongs to} $W_2^{\infty}(\mathbb{M}^m)$ whenever 
\begin{equation*}
K(x,\cdot)\in W_2^{\infty}(\mathbb{M}^m), \quad x\in \mathbb{M}^m\quad \text{a.e.}.
\end{equation*}

At this point it is clear how to prove versions of Theorems \ref{maintheorem} and \ref{maintheorem2} on this context:
it is enough to repeat with minor adaptations the steps presented in Section \ref{sec_proofmain}.

\section{Acknowledgement}

We thank the anonymous referees for their valuable suggestions.


\bibliographystyle{amsalpha}

\end{document}